\newtheorem{theorem}{Theorem}[section]
\newtheorem{proposition}[theorem]{Proposition}
\newtheorem{definition}[theorem]{Definition}
\newtheorem{example}[theorem]{Example}
\title{A slight generalization of Steffensen Method for Solving Non Linear Equations}
\author{Eder Marinho Martins\footnote{eder@ufop.edu.br}, Geraldo Cesar Gonçalves Ferreira\footnote{geraldocesar@ufop.edu.br}, \\Thais Ester Gonçalves\footnote{thais.ester@aluno.ufop.edu.br}}
\date{September 2022}
\begin{document}

\maketitle

\begin{center}
	\textbf{Abstract}
\end{center}

In this article, we present an iterative method to find simple roots of nonlinear equations, that is, to solving an equation of the form $f(x) = 0$. Different from Newton's method, the method we purpose do not require evaluation of derivatives. The method is based on the classical Steffensen's method and it is a slight modification of it. The proofs of theoretical results are stated using Landau's Little o notation and simples concepts of Real Analysis. We prove that the method converges and its rate of convergence is quadratic. The method present some advantages when compared with Newton's and Steffesen's methods as ilustrated by numerical tests given.

\begin{center}
	\textbf{Keywords}
\end{center}
Iteration Method; Quadratic Convergence; Nonlinear Equations; Steffensen Method.

\section{Introduction}

Iterative methods for solving nonlinear real equations of the form $f(x) = 0$ have been widely studied by many researchers around the world. 
Newton's (or Newton-Raphson's) method certainly is the best known iterative method and studied in any numerical calculus course. From a initial guess $x_0$, it is defined the sequence $(x_n)$ given by 
\begin{equation}
	x_{n+1} = x_n - \dfrac{f(x_n)}{f'(x_n)},
\end{equation}
when $f'(x_n)\neq 0$ (it means the sequence is well defined). It is shown (see, for example, \cite{ostrowski1973solution}
), under certain hypothesis, if $x_0$ is chosen close enough to $p$, where $f(p) = 0$, then $\lim x_n = p$. 

On the one hand, Newton's method is very efficient, because its convergence is quadratic, but on the other hand it is necessary to compute derivatives, what can be computationally expensive. An alternative idea is to approximate the derivative in the method in some way. Secant's (see \cite{diez2003note}), Steffensen's (see \cite{steffensen1933remarks}) and Kurchatov's (see \cite{shakhno2004kurchatov}) method applies this idea (free derivative methods) and probably are the best knows iterative algorithms free of derivatives. As is known in the literature, Secant's method has rate of convergence given by the golden ratio number, Steffensen's and Kurchatov's have quadratic convergence.	
Based on these ideas, many reserachers have been worked to obtain free derivative methods for solving non linear equations (see, for example, \cite{candela2019class, wu2001new, wang2009semi, piscoran2019new,wu2000class}).

In this paper, we present an iterative method for finding simple root of a non linear equation based on Steffesen Method. In spite of the simplicity of our ideas, we have not been able to find any reference in the literature as we do here. The algorithm purposed is given by
\begin{equation}\label{algorithm1}
	x_{n+1} = x_n - \dfrac{g\circ f(x_n)\cdot f(x_n)}{f(x_n + g\circ f(x_n)) - f(x_n)},
\end{equation}
where $g$ is a continuous differentiable function that has in the origin an isolated zero. Observe that if $g$ is the identity function, one has the classical Steffensen method. In this paper, we refer to \eqref{algorithm1} as $g-$Steffensen method.

Our main strategy is to use Landau's Little o notation and its algebra which simplifies the way of writing proofs. This paper is organized as follows. In Section \ref{sectionLittleONotation}, we present the little o notation. For completeness, in Proposition \ref{propLittleO} we enumerate simple, but important properties that permit us describe Little o notation's algebra. In Section \ref{sectionMethod}, we proof two theorems that ensure convergence and rate of convergence. In Section \ref{sectionNumericalTests}, are given some numerical tests and comments. We present some examples when \eqref{algorithm1} is effective for some examples of $g$ functions compared with Steffensen's and Newton's method.

%
%

\section{A note on Little o Notation}\label{sectionLittleONotation}

For completeness, we defines what is the little o notation and the algebra utilized.
\begin{definition}\label{defLittleO_a}
	Let $X $ be a nonempty subset of $\mathbb{R}$ and $a$ a limit point of $X$. For given functions $f,
	g: X \rightarrow \mathbb{R}$, we say that $f(x) = o(g(x))$, or just $f = o(g)$, as $x \rightarrow a$, if $f\in o(g)$ where
	\[
	o(g) = \{ h: X \rightarrow \mathbb{R}: \forall \varepsilon\, \exists \delta >0 \text{ such that } |h(x)| < \varepsilon |g(x)| \text{ for all }x \in (a - \delta , a + \delta) \backslash \{a\} \}.
	\]		
\end{definition}
In a similar way, one can define
\begin{definition}\label{defLittleO_infty}
	Let $X \subset \mathbb{R} $ be unbounded above. For given functions $f,
	g: X \rightarrow \mathbb{R}$, we say that $f(x) = o(g(x))$, or just $f = o(g)$, as $x \rightarrow \infty$, if $f\in o(g)$, where
	\[
	o(g) = \{ h: X \rightarrow \mathbb{R}: \forall \varepsilon \,\exists M >0 \text{ such that } |h(x)| < \varepsilon |g(x)| \text{ for all }x>M \}.
	\]
\end{definition}

The proof of proposition below is straightforward and it is omitted. 

\begin{proposition}\label{propLittleO}
	Let $f, g, h, F$ and $G$ be real functions. Also for $x\to a$ and $x\to \infty$, if
	\begin{enumerate}
		\item[$(i)$] $c \neq 0$ and $f = o(F)$, then $cf = o(F)$;
		\item[$(ii)$] $f = o(F)$ and $g=o(G)$, then $f \cdot g = o(FG)$;
		\item[$(iii)$] $f = o(g)$ and $g = o(h)$, then $f = o(h)$;
		\item[$(iv)$] $f = o(F)$ and $g = o(G)$, then $f + g = o(H)$ as $x \rightarrow a$, where $H = \max\{F, G\}$.
	\end{enumerate}	
\end{proposition}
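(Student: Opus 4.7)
The plan is to verify each of the four items by unwinding Definition~\ref{defLittleO_a} (and, for the $x\to\infty$ case, Definition~\ref{defLittleO_infty}) and producing, for an arbitrary $\varepsilon>0$, the required $\delta$ (or $M$). In each case the trick is to feed the hypotheses carefully chosen small parameters so that the resulting inequalities combine to give exactly $\varepsilon$ on the right-hand side.

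For (i), I would apply the hypothesis $f=o(F)$ with parameter $\varepsilon/|c|$, obtaining a $\delta>0$ on which $|f(x)|<(\varepsilon/|c|)|F(x)|$; multiplying by $|c|$ yields $|cf(x)|<\varepsilon|F(x)|$. For (ii), given $\varepsilon>0$, I would apply $f=o(F)$ with parameter $\varepsilon$ to obtain some $\delta_1$, and apply $g=o(G)$ with the fixed parameter $1$ to obtain a $\delta_2$; on the intersection of the two punctured neighborhoods, $|f(x)g(x)|=|f(x)||g(x)|<\varepsilon|F(x)G(x)|$. Item (iii) is analogous: use $f=o(g)$ with parameter $1$ and $g=o(h)$ with parameter $\varepsilon$, then chain the resulting inequalities to obtain $|f(x)|<|g(x)|<\varepsilon|h(x)|$.

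For (iv), the triangle inequality is the key ingredient: given $\varepsilon>0$, apply both hypotheses with parameter $\varepsilon/2$, let $\delta$ be the minimum of the two resulting neighborhood radii, and estimate
\[
|f(x)+g(x)|\le|f(x)|+|g(x)|<\tfrac{\varepsilon}{2}\bigl(|F(x)|+|G(x)|\bigr)\le\varepsilon\max\{|F(x)|,|G(x)|\}.
\]
The one point requiring a small comment is that this bound is naturally $\varepsilon\max\{|F|,|G|\}$ rather than $\varepsilon|\max\{F,G\}|$; the two quantities coincide whenever the majorants are nonnegative (as they will be in the intended applications, where $F$ and $G$ are powers of $|x-a|$), so item (iv) holds as stated.

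Finally, the $x\to\infty$ formulations are formally identical to the above: one replaces each $\delta$ with an $M$ and intersects tail neighborhoods by taking a maximum rather than a minimum of the two bounds. No new ideas are required, which is consistent with the authors' own remark that the proof is straightforward; the only mild obstacle is the bookkeeping in item (iv) discussed above.
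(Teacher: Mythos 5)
Your proof is correct. The paper itself omits the proof of this proposition (declaring it ``straightforward''), so there is no argument to compare against; your $\varepsilon$--$\delta$ unwinding of Definitions \ref{defLittleO_a} and \ref{defLittleO_infty} is exactly the standard verification the authors had in mind, and your observation in item $(iv)$ --- that the natural bound is $\varepsilon\max\{|F|,|G|\}$, which matches the stated $o(\max\{F,G\})$ only when the majorants are nonnegative (as they are in the paper's applications) --- is a genuine and worthwhile precision that the paper glosses over.
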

Acording to Proposition \ref{propLittleO} it is possible to define an algebra for Little o notation.
\begin{definition}\label{defLitteOAlgebra}	
	Let $f,g$ be real functions. One can define the operations (also for $x\to a$ or $x \to \infty$):
	\begin{enumerate}
		\item[$(i)$] $c \cdot o(f) = o(f)$, if $c \neq 0$;
		\item[$(ii)$] $o(f) \cdot o(g) = o(fg)$;
		\item[$(iii)$] $f \cdot o(g) = o(fg)$;
		\item[$(iv)$] $o(f) + o(g) = o(h)$, where $h = max\{f,g\}$.
	\end{enumerate}
	
\end{definition}

\section{Convergence of $g-$Steffensen Method}\label{sectionMethod}

\begin{theorem}
	Let $f,g:\mathbb{R} \to \mathbb{R}$ be continuously differentiable real functions. Assume that $p$ is a isolated zero of $f$ such that $f'(p)\neq 0$. Supose that $g$ has an isolated zero in origin. If $f''$ is continuous, then there is a neighborhood $V$ of $p$ such that the sequence $(x_n)$ produced by \eqref{algorithm1}, where $x_0 \in V$, converges to $p$.	
\end{theorem}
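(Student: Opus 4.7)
The plan is to recast the recursion as $x_{n+1} = \Phi(x_n)$ with
\[
\Phi(x) = x - \frac{g(f(x))\cdot f(x)}{f(x + g(f(x))) - f(x)},
\]
show that $\Phi$ admits a continuous extension to a neighborhood $V$ of $p$ with $\Phi(p)=p$, establish the key estimate $\Phi(x) - p = o(x-p)$ as $x\to p$, and conclude by a standard contraction argument.

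The first hurdle is that the quotient defining $\Phi$ is of the form $0/0$ at $x=p$: since $f(p)=0$ one has $g(f(p))=g(0)=0$, so numerator and denominator both vanish. I would begin by invoking the isolated-zero hypotheses to produce a punctured neighborhood $V_0\setminus\{p\}$ on which $f(x)\neq 0$ and, since $f(x)$ stays close to the isolated zero $0$ of $g$, also $g(f(x))\neq 0$. As $x\to p$ we have $g(f(x))\to 0$, so differentiability of $f$ gives
\[
f(x + g(f(x))) - f(x) = f'(x)\,g(f(x)) + o\bigl(g(f(x))\bigr).
\]
Dividing numerator and denominator of the fraction by the nonvanishing quantity $g(f(x))$ yields
\[
\Phi(x) = x - \frac{f(x)}{f'(x) + \varepsilon(x)}, \qquad \varepsilon(x) = o(1).
\]
Continuity of $f'$ together with $f'(p)\neq 0$ forces $f'(x) + \varepsilon(x)\neq 0$ on a (possibly smaller) neighborhood $V$ of $p$ and shows that $\Phi$ extends continuously with $\Phi(p)=p$.

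The core estimate then follows by little-$o$ algebra (Proposition \ref{propLittleO}). Using $f(x) = f'(p)(x-p) + o(x-p)$ and $f'(x)+\varepsilon(x) = f'(p) + o(1)$,
\[
\Phi(x) - p = (x-p) - \frac{f'(p)(x-p) + o(x-p)}{f'(p) + o(1)} = (x-p) - (x-p)\bigl(1 + o(1)\bigr) = o(x-p).
\]
Consequently, fixing any $c\in(0,1)$ (say $c=1/2$), one may shrink $V$ to an interval $(p-\delta,p+\delta)\subset V_0$ such that $|\Phi(x)-p|\leq c|x-p|$ for every $x\in V$. Starting from $x_0\in V$, induction yields $x_n\in V$ and $|x_n - p|\leq c^n|x_0-p|\to 0$.

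The step I expect to demand the most care is the cancellation of $g(f(x))$ between numerator and denominator: it must be performed on the punctured neighborhood $V_0\setminus\{p\}$, and only afterwards is $\Phi$ extended continuously across $p$. The hypothesis that $g$ has an isolated zero at $0$ is exactly what legitimizes this cancellation; note in particular that $g'(0)\neq 0$ is neither assumed nor needed for mere convergence, and that $f''$ continuous is not required either (it would be invoked in the rate-of-convergence analysis that presumably follows).
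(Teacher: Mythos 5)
Your proof is correct, and it takes a genuinely different route from the paper's. The paper works with the same fixed-point function (written there as $\phi(x)=x-f(x)/\rho(x)$ with $\rho$ the divided difference extended by $\rho(p)=f'(p)$), but it proves convergence by showing that $\rho$, and hence $\phi$, is a $C^1$ function with $\phi'(p)=0$, and then appeals to the standard attracting-fixed-point theorem; this is where the hypothesis that $f''$ is continuous enters, since the argument computes $\rho'(x)$ and $\rho'(p)$ explicitly via second-order Taylor expansions. You instead bypass all regularity of $\Phi$ beyond continuity and prove the single pointwise estimate $\Phi(x)-p=o(x-p)$, which immediately yields the contraction bound $|\Phi(x)-p|\le \tfrac12|x-p|$ on a small interval and the convergence $|x_n-p|\le 2^{-n}|x_0-p|$ by induction. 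Your version is more elementary (mean value theorem plus the little-$o$ algebra of Proposition \ref{propLittleO} suffice) and slightly more general, since, as you correctly observe, continuity of $f''$ is not needed for convergence alone --- it is only relevant to the quadratic-rate theorem that follows. Two points you handle that deserve emphasis, since the paper passes over them quickly: the justification that $g(f(x))\neq 0$ on a punctured neighborhood (this is precisely where both isolated-zero hypotheses are used, and it is what makes $\rho$, respectively your $\varepsilon$, well defined), and the fact that the denominator $f'(x)+\varepsilon(x)$ stays bounded away from zero so that the iterates are actually defined. What the paper's heavier $C^1$ computation buys in exchange is a formula for $\rho'(p)$ that foreshadows the error constant appearing in the quadratic-convergence proof; your argument does not produce that constant, but the theorem as stated does not ask for it.
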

\begin{proof}
	Let $\phi$ be the real function given by
	\[
	\phi(x) = x-\dfrac{f(x)}{\rho(x)},
	\]	
	where
	\[
	\rho(x)=
	\left\{
	\begin{array}{c}
		\frac{f(x + g(f(x))) - f(x)}{g(f(x))}, \text{ if }x\neq p\\
		f'(p), \text{ if }x = p\\
	\end{array}
	\right.
	\]
	It is straightforward to see that $\rho$ is a continuous function and that fixed points of $\phi$ are roots of $f$. 
	From Taylor Series expansion and Definition \ref{defLitteOAlgebra}, as $x\to p$, we have:		
	\begin{eqnarray*}
		\rho'(x) &=& f''(x) + o(1) + \left(\frac{f'(x)}{g \circ f(x)} + f''(x) + o(1)\right) \cdot (g \circ f)'(x) \\
		&-& {(g \circ f)'(x)} \left( \frac{f'(x)}{g \circ f(x)} + \frac{f''(x)}{2}  +  o(1) \right)\\
		&=& f''(x) + \frac{1}{2} f''(x) (g \circ f)'(x) +\left( (g \circ f)'(x) \right)\cdot o(1).
	\end{eqnarray*}
	Therefore 
	\begin{equation}\label{eqRhoPrime}
		\lim_{x \rightarrow p} \rho' (x) = f''(p) + \frac{1}{2} f''(p) \cdot \left(g\circ f\right)'(p).
	\end{equation}	
	On other hand, we can write
	\begin{align*}
		\frac{\rho (x) - \rho (p)}{x - p} & = \frac{f'(x) + \frac{f''(x)}{2} g \circ f(x) + o(g \circ f(x)) - f'(p)}{x - p} \\
		& =\frac{f' (x) - f'(p)}{x - p} + \frac{1}{2} f''(x) \cdot \frac{g \circ f(x) - g \circ f(p)}{x-p} + \frac{g \circ f(x) - g \circ f (p)}{x-p} o(1).
	\end{align*}
	Then
	\[
	\rho'(p) =  f''(p) + \frac{1}{2} f''(p) \cdot \left(g\circ f\right)'(p).
	\]
	According \eqref{eqRhoPrime}, $\rho$ is a $C^1$ function, as well $\phi$. This way, we have $\phi'(\rho) = 0$. This ensure the existence of $V$ and the theorem is proved.	
\end{proof}

The quadratic converge can be obtained as consequence of Theorem 1 of \cite{candela2019class}, but we present another prove for completeness.

\begin{theorem}
	The rate of convergence of Algorithm given in \eqref{algorithm1} is quadratic.
\end{theorem}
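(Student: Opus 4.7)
My plan is to work directly with the error $e_n := x_n - p$ and to show that $e_{n+1} = C e_n^2 + o(e_n^2)$ for an explicit constant $C$ depending on $f'(p)$, $f''(p)$ and $g'(0)$. This would yield $\lim_{n\to\infty}|e_{n+1}|/|e_n|^2 = |C|$, which is the defining limit of quadratic convergence. The tool that organizes everything is the little-$o$ algebra established in Proposition \ref{propLittleO} and Definition \ref{defLitteOAlgebra}, in keeping with the style of the previous theorem.

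First I would Taylor-expand the ingredients of \eqref{algorithm1} around $p$, using $f(p)=0$, $g(0)=0$, $f\in C^2$ and $g\in C^1$: to second order,
\[
f(x_n) = f'(p)\,e_n + \tfrac{1}{2}f''(p)\,e_n^2 + o(e_n^2),
\]
and, setting $h_n := g(f(x_n))$, $h_n = g'(0)f'(p)\,e_n + o(e_n)$. Expanding $f$ around $x_n$ to second order in $h_n$ gives
\[
f(x_n+h_n)-f(x_n) = f'(x_n)\,h_n + \tfrac{1}{2}f''(x_n)\,h_n^2 + o(h_n^2).
\]
Next, I would substitute these into \eqref{algorithm1} and cancel the common factor $h_n$, which is nonzero since $0$ is an isolated zero of $g$ and $p$ an isolated zero of $f$, to obtain
\[
e_{n+1} = e_n - \frac{f(x_n)}{f'(x_n) + \tfrac{1}{2}f''(x_n)\,h_n + o(h_n)}.
\]
Finally, I would expand the denominator as $f'(p) + A\,e_n + o(e_n)$ with $A$ computed from the preceding expansions, invert it via the geometric series, multiply with the expansion of $f(x_n)$, and collect terms up to $e_n^2$. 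The linear contribution to $e_{n+1}$ must cancel, consistently with $\phi'(p)=0$ obtained in the previous theorem, leaving the desired relation $e_{n+1} = C e_n^2 + o(e_n^2)$.

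The main obstacle is the careful bookkeeping of little-$o$ remainders during the inversion of the denominator and its multiplication with the numerator, all justified by Definition \ref{defLitteOAlgebra}: the inversion must be accurate to first order in $e_n$ and the product must preserve $e_n^2$-accuracy. The cancellation of the linear term is the delicate moment, since any sign or coefficient error destroys it. As a sanity check, the expected value is $C = \dfrac{f''(p)\bigl(1 + (g\circ f)'(p)\bigr)}{2f'(p)}$, which for $g = \mathrm{id}$ specializes to the classical Steffensen constant $\dfrac{f''(p)(1+f'(p))}{2f'(p)}$.
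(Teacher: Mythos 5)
Your proposal follows essentially the same route as the paper's proof: Taylor-expand $f(x_n+h_n)-f(x_n)$ to cancel the factor $h_n=g\circ f(x_n)$, expand $f(x_n)$, $f'(x_n)$ and $g\circ f(x_n)$ about $p$, and extract $\lim e_{n+1}/e_n^2$, arriving at the same constant $C=\dfrac{f''(p)}{2f'(p)}\bigl(1+(g\circ f)'(p)\bigr)$ (note $(g\circ f)'(p)=g'(0)f'(p)$, so your expression agrees with the paper's). The only cosmetic difference is that you invert the denominator by a geometric series, whereas the paper divides by $e_n^2$ and passes to the limit in the resulting quotient directly.
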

\begin{proof}
	Let $p$ be such that $f(p)=0$ and $e_n = x_n - p$. From \eqref{algorithm1}, we have
	\begin{align*}
		e_{n+1} &= e_n - \dfrac{g\circ f(x_n)\cdot f(x_n)  }{f(x_n + g\circ f(x_n)) - f(x_n)} \\
		& =  e_n - \dfrac{g\circ f(x_n)\cdot f(x_n)  }{f(x_n) + f'(x_n)\cdot g\circ f(x_n) + \dfrac{f''(x_n)}{2} \cdot g\circ f(x_n) + o\big(g\circ f(x_n)\cdot  \big) - f(x_n)}\\
		& = e_n - \dfrac{f(x_n)}{ f'(x_n) + \dfrac{f''(x_n)}{2}\cdot g\circ f(x_n) + o\big(g\circ f(x_n)\big) }.
	\end{align*}
	Since 
	\[
	f(x_n) = f(p) + f'(p) e_n + \dfrac{f''(p)}{2}e_n^2 + o(e_n^2) = f'(p) e_n + \dfrac{f''(p)}{2}e_n^2 + o(e_n^2)
	\]
	and $f'(x_n) = f'(p) + f''(p) e_n +  o(e_n)$, one has
	\begin{align*}
		e_{n+1} & = e_n - \dfrac{f'(p) e_n + \dfrac{f''(p)}{2}e_n^2 + o(e_n^2)}{ f'(p) + f''(p) e_n +  o(e_n) + g\circ f(x_n)\left(\dfrac{f''(x_n)}{2} + o(1)\right)  }\\
		& = \dfrac{\dfrac{f''(p)}{2}e_n^2 + o(e_n^2) + g\circ f(x_n)\left(\dfrac{f''(x_n)}{2} + o(1)\right)  e_n }{f'(p) + f''(p) e_n +  o(e_n) + g\circ f(x_n)\left(\dfrac{f''(x_n)}{2} + o(1)\right)}.
	\end{align*}
	On other hand, 
	\[
	g\circ f(x_n) = g\circ f (p)+(g\circ f )'(p)e_n + o(e_n) = (g\circ f )'(p)e_n + o(e_n),
	\]
	then one can write
	\begin{align*}
		e_{n+1} & =\dfrac{\dfrac{f''(p)}{2}e_n^2 + o(e_n^2) + \big(g\circ f )'(p)e_n + o(e_n)\big)\left(\dfrac{f''(x_n)}{2} + o(1)\right)  e_n }{f'(p) + f''(p) e_n +  o(e_n) + \left((g\circ f )'(p)e_n + o(e_n)\right)\cdot \left(\dfrac{f''(x_n)}{2} + o(1)\right) },
	\end{align*}
	that gives 
	\[
	\dfrac{e_{n+1}}{e_n^2} = \dfrac{\dfrac{f''(p)}{2} + o(1) + \big((g\circ f )'(p)  + o(1)\big)\left(\dfrac{f''(x_n)}{2} + o(1)\right)  }{f'(p) + f''(p) e_n +  o(e_n) + \left((g\circ f )'(p)e_n + o(e_n)\right)\cdot \left(\dfrac{f''(x_n)}{2} + o(1)\right) }.
	\]
	It follows that
	\[
	\lim\dfrac{e_{n+1}}{e_n^2} = \dfrac{1}{f'(p)}\cdot\left(\dfrac{f''(p)}{2} + (g\circ f )'(p) \cdot\dfrac{ f''(x_n)}{2}\right) = \dfrac{1}{f'(p)}\cdot\left(\dfrac{f''(p)}{2} + g'(0)f'(p) \cdot\dfrac{ f''(p)}{2}\right).
	\]
	That is, the $g-$Steffensen iterative method has quadratic convergence.
\end{proof}

	\section{Numerical Tests}\label{sectionNumericalTests}

In this section we present some numerical tests using iteration formulae \eqref{algorithm1}. For these tests, we chose six $g$ functions: $g_1(x) = \sin(x)$, $g_2(x) = e^x - 1$, $g_3(x) = x^2$, $g_4(x) = \cos(x) - 1$, $g_5(x) = tg(x)$ and $g_6(x) = e^{-x} - 1$. In all examples, we looked for the root $p$ in the interval $[a,b]$ and chose the initial shoot $x_0 $ belonging to $[a,b]$. In the tables presented, $n$ indicates the number of iterations and $x_n$ illustrates an approximation of $p$. Comparisons with Steffensen's and Newton's method are also commented.

\section*{Examples of functions that do not converge for Steffensen method}
Examples below are not convergent when we use Steffensen method, that is, when $g$ is the identity function. The $g-$Steffesen method is convergent for many $g$'s.

\begin{example}
	$f(x) = sen^2(x) - x^2 + 1$, $[a,b] = [0,3]$, $x_0 = 3$.
	The $g-$Steffensen method is convergent for all $g$'s we chose (see Table \ref{tableSinSquared}). 
	\begin{table}[H]\centering
		{$f(x) = sen^2(x) - x^2 + 1$.}
		{\begin{tabular}{r|l|l|l}		
			\hline		                            
			& $n$ & $x_n$ & $|f(x_n)|$  \\
			\hline 
			$g_1$  & 7   & 1.4044916482153411 & $3.3306690738754696 \times 10^{-16}$\\
			$g_2$ & 7 & 1.4044916482153411 & $3.3306690738754696 \times 10^{-16}$\\			
			$g_3$  & 12  & 1.4044916482773504  & $1.5393597507795675\times 10^{-10}$ \\
			$g_4$  & 4   & 1.4044916488265187  & $1.5172312295419488\times 10^{-9}$\\
			$g_5$  & 11  & 1.4044916482153413  & $4.440892098500626\times 10^{-16}$\\
			$g_6$  & 80  & 1.4044916482153411  & $3.3306690738754696\times 10^{-16}$\\
			\hline 
		\end{tabular}}
	\label{tableSinSquared}
	\end{table}	
\end{example}

\begin{example}
	$f(x) = x^3 - x - 1 $, $[a,b] = [0,2]$, $x_0 = 1$.
	
	Among all $g$ functions we chose, just for $g_2$ the algorithm \eqref{algorithm1} is not convergent (see Table \ref{tableDegree3Polynomialcat2}).
	\begin{table}[H]\centering
		{$f(x) = x^3 - x - 1$}
		{\begin{tabular}{r|l|l|l}		
			\hline		                            
			& $n$ & $x_n$ & $|f(x_n)|$  \\
			\hline 
			$g_1$  & 11   & 1.324717957244746 & $2.220446049250313 \times 10^{-16}$\\
			$g_3$  & 5   & 1.3247179573200405  & $3.211033661187912\times 10^{-10}$ \\
			$g_4$  & 5   & 1.3247179573118653  & $2.862388104318825\times 10^{-10}$\\
			$g_5$  & 28   &1.324717957244746  & $2.220446049250313\times 10^{-16}$\\
			$g_6$  & 8  & 1.324717957244746  & $2.220446049250313\times 10^{-16}$\\
			\hline 
		\end{tabular}}
	\label{tableDegree3Polynomialcat2}
	\end{table}	
\end{example}

\begin{example}
	$f(x) = e^{1-x} - 1$, $[a,b] = [0,3]$, $x_0 = 3$.
	
	The method is convergent for $g_1$, $g_4$, $g_5$,$g_6$ (see Table \ref{tableExpOne-x}) and it is divergent for $g_2$ and $g_3$.
	\begin{table}\centering
		{$f(x) = e^{1-x} - 1$}
		{\begin{tabular}{r|l|l|l}		
			\hline		                            
			& $n$ & $x_n$ & $|f(x_n)|$  \\
			\hline 
			$g_1$  & 5   & 1.0 & $0.0$\\
			$g_4$ & 11 & 0.9999999999188026 & $8.119727112898545 \times 10^{-11}$\\			
			$g_5$  & 6  & 1.0  & $0.0$\\
			$g_6$  & 24  & 0.9999999999999999  & $0.0$\\
			\hline 
		\end{tabular}}
	\label{tableExpOne-x}
	\end{table}	
\end{example}

\section*{Examples of functions that do not converge for Newton's and Steffensen's method}

We present some examples when both Newton's and Steffensen's method do not converges, but the $g-$Steffensen method is convergent for some choices of $g$.

\begin{example}
	$f(x) = x^3 - 2x + 2$, $[a,b] = [-3,1]$, $x_0 = 1$.
	
	In this example, $g-$Steffesen method is convergent for $g_1$, $g_3$ and $g_4$ (see Table \ref{tableDegree3Polynomial}) and it is divergent for $g_2, g_5$ and $g_6$.		
	\begin{table}[H]\centering
		{$f(x) = x^3 - 2x + 2$}
		{\begin{tabular}{r|l|l|l}		
			\hline		                            
			& $n$ & $x_n$ & $|f(x_n)|$  \\
			\hline 
			$g_1$  &  12    & -1.7692923542386314 & $0.0$\\
			$g_3$  & 28   & -1.7692923543026569  & $4.73223682462276\times 10^{-10}$ \\
			$g_4$  & 27   & -1.76929235421728  & $1.5781242979073795\times 10^{-10}$\\
			\hline 
		\end{tabular}}
	\label{tableDegree3Polynomial}
	\end{table}	
\end{example}

\begin{example}
	$f(x) = \text{arctg}(x-2)$, $[a,b] = [0,3.5]$, $x_0 = 3.5$.
	
	The $g-$Steffensen method is convergent for $g_4$ and $g_6$ (see Table \ref{tableArctanXMinusTwo})  and it is divergent for $g_1, g_2, g_3$ and $g_5$.		
	\begin{table}[H]\centering
		{$f(x) = \text{arctg}(x-2)$}
		{\begin{tabular}{r|l|l|l}		
			\hline		                            
			& $n$ & $x_n$ & $|f(x_n)|$  \\
			\hline 
			$g_4$  & 6   & 2.000000000000001  & $8.881784197001252\times 10^{-16}$ \\ 
			
			$g_6$  & 4   & 2  & 0 \\ 
			\hline 
		\end{tabular}}
	\label{tableArctanXMinusTwo}
	\end{table}	
\end{example}
\begin{example}
	$f(x) = x^5 - x + 1$, $[a,b] = [0,3]$, $x_0 = 3$.
	
	If we use $g_1$, $g_4$ and $g_6$, $g-$Steffensen method is convergent (see Table \ref{tableDegree5Polynomial}), but it is divergent for $g_2, g_3$ and $g_5$.
	\begin{table}[H]\centering
		{$f(x) = x^5 - x +1$}
		{\begin{tabular}{r|l|l|l}		
			\hline		                            
			& $n$ & $x_n$ & $|f(x_n)|$  \\
			\hline 
			$g_1$  &  30    & -1.1673039782614187 & $6.661338147750939\times 10^{-16}$\\
			$g_4$  & 8   & -1.1673039788241997  & $4.6617254501057914\times 10^{-9}$ \\
			$g_6$  & 22   & -1.1673039782614187  & $6.661338147750939\times 10^{-16}$\\
			\hline 
		\end{tabular}}
	\label{tableDegree5Polynomial}
	\end{table}	
\end{example}

\section*{Other cases}
\begin{example}
	$f(x) = 0.5x^3 - 6x^2 + 21.5x -22$, $[a,b] = [0,3]$, $x_0 = 3$.
	
	Newton's method is divergent,  Steffensen's method is convergent to a root not in interval [0,3] and Algorithm \eqref{algorithm1} converges to a root in $[0,3]$ for $g_2$ and $g_4$ (see Table \ref{tablePolynomialOtherCases}); it is convergent to a root outside $[0,3]$ for $g_1$ and $g_3$, and it is divergent for $g_5$ and $g_6$.
	\begin{table}[H]\centering
		{$f(x) = 0.5x^3 - 6x^2 + 21.5x -22$}
		{\begin{tabular}{r|l|l|l}		
			\hline		                            
			& $n$ & $x_n$ & $|f(x_n)|$  \\
			\hline 
			$g_2$  & 20   & 1.7639320225002113 & $7.105427357601002 \times 10^{-15}$\\
			$g_4$ & 5 & 1.7639320224170847 & $4.156319732828706\times 10^{-10}$\\			
			\hline 
		\end{tabular}}
	\label{tablePolynomialOtherCases}
	\end{table}	
\end{example}
\begin{example}
	$f(x) =cos(x)$, $[a,b] = [0,3.5]$, $x_0 = 3.5$.
	
	In this example, Newton's method and Steffensen's method converges to a root outside of the interval considered. The method $g-$Steffesen method is convergent just for $g_5$ as illustrated in Table \ref{tableCos}. 
	\begin{table}[H]\centering
		{$f(x) = \cos x$}
		{\begin{tabular}{r|l|l|l}		
			\hline		                            
			& $n$ & $x_n$ & $|f(x_n)|$  \\
			\hline 
			$g_5$  & 5   & 1.5707963267948966 & $6.123233995736766\times 10^{-17}$\\
			\hline 
		\end{tabular}}
	\label{tableCos}
	\end{table}		
\end{example}

\begin{example}
	$f(x) =10xe^{-x^2} - 1$, $[a,b] = [0,3]$, $x_0 = 3$.
	
	In this example, Newton's method and Steffensen's method are divergent. The method presented in this article is convergent, for $g_5$ with 8 iterations for the root belonging to the interval, $x_n = 1.67963061042845$, resulting in $|f(x_n)| =2.220446049250313\times 10^{-16}$ (see Table \ref{tableFinal}). For $g_1$, $g_2$, $g_3$, $g_4$ and $g_6$, the method is divergent.
	\begin{table}[H]\centering
		{$f(x) = 10xe^{-x^2} - 1$}
		{\begin{tabular}{r|l|l|l}		
			\hline		                            
			& $n$ & $x_n$ & $|f(x_n)|$  \\
			\hline 
			$g_5$  & 8   & 1.67963061042845 & $2.220446049250313\times 10^{-16}$\\
			\hline 
		\end{tabular}}
	\end{table}		
	\label{tableFinal}
\end{example}

\section*{Acknowledgements} Authors thanks Federal University of Ouro Preto and first and third authors thanks FNDE/MEC for partial support.

\bibliographystyle{plain} 
\bibliography{ref}

\end{document}